\documentclass[11pt,a4j]{article}
\usepackage{amsthm}

    \newtheorem{thm}{Theorem}[section]
    \newtheorem{prop}[thm]{Proposition}
    \newtheorem{lem}[thm]{Lemma}

  \theoremstyle{definition}
    
    \newtheorem{defi}[thm]{Definition}
  \theoremstyle{remark}
    \newtheorem{rem}[thm]{Remark}
    \newtheorem{ex}[thm]{Example}

\usepackage{amsmath}
\usepackage{amssymb}
\usepackage[all]{xy}
\usepackage{graphicx}
\usepackage{mathrsfs}
\usepackage{indentfirst}

\title{Sheaf theoretic characterization of \'{e}tale groupoids}
\author{Koji Yamazaki}
\begin{document}
\maketitle
\begin{abstract}
The study of Haeflier \cite{haefliger1958,haefliger2006lecture} suggests that it is natural to regard a pseudogroup as an \'{e}tale groupoid.
We show that any \'{e}tale groupoid corresponds to a {\it pseudogroup sheaf}, a new generalization of a pseudogroup.
This correspondence is an analog of the equivalence of the two definitions of a sheaf: 
as an \'{e}tale space and as a contravariant functor.
\end{abstract}
\setcounter{section}{-1}
	\section{Introduction}
Let $X$ be a topological space, and let $X_{top}$ be the set of the open sets in $X$.
$X_{top}$ is an oreded set, therefore, $X_{top}$ is a small category.
A {\it presheaf} on $X$ is a functor $X_{top}^{op} \rightarrow {\bf Set}$, where {\bf Set} is the category of the (small) sets and the maps.
A presheaf $\mathcal{F}$ is a {\it sheaf} if, for any open set $U \subset X$ and any open covering $\{ U_\lambda \}$ of $U$, the following diagram is an equalizer:
\[
\mathcal{F}(U) \rightarrow \prod_\lambda \mathcal{F}(U_\lambda) \rightrightarrows \prod_{\lambda, \mu} \mathcal{F}(U_{\lambda \mu}),
\]
where $U_{\lambda \mu} = U_\lambda \cap U_\mu$.
The category of the sheaves on $X$ is equivalent to the category of the \'{e}tale spaces over $X$.
An {\it \'{e}tale space} over $X$ is a local homeomorphism $E \rightarrow X$ from a topological space $E$ to the given topological space $X$.
(cf. \cite{forster2012lectures} or Section 1.) \par
For two open sets $U$ and $V$ in $X$, let $\operatorname{Homeo}_X (U,V)$ be the set of the homeomorphisms from $U$ to $V$.
The correction of the sets $\operatorname{Homeo}_X (U,V)$ determines a groupoid $\operatorname{Homeo}_X$.
A {\it pseudogroup} is a subgroupoid $\mathcal{H}$ of $\operatorname{Homeo}_X$ satisfying the sheaf property:
\begin{itemize}
  \item For any homeomorphism $f \in \operatorname{Homeo}_X (U,V)$, $f$ belongs to $\mathcal{H}(U,V)$ if and only if there exists an open covering $\{ U_\alpha \}$ of $U$ such that the restrictions $f |_{U_\alpha}$ belong to $\mathcal{H} (U_\alpha , f(U_\alpha))$.
\end{itemize}
According to Haeflier \cite{haefliger1958,haefliger2006lecture}, it is natural to regard pseudogroups as \'{e}tale groupoids.
A {\it topological groupoid} (over $X$) is a groupoid object in the category of the topological spaces (such that the object space is $X$). 
A topological groupoid is {\it \'{e}tale} if the source map is a local homeomorphism
(cf. \cite{moerdijk2003introduction} or Section 1.1).
Resende \cite{resende2007etale} introduced an {\it abstract pseuedogroup} as a complete and infinitely distributive inverse semigroup, and he gave equivalent correspondence among the \'{e}tale groupoids, the abstract pseuedogroups and the {\it quantales}, by locale theory. \par
We define a {\it pseudogroup sheaf} (cf. Definition \ref{pseudogroupsheaf}), a new generalization of a pseudogroup, and prove the equivalence of the \'{e}tale groupoids over $X$ and the pseudogroup sheaves on $X$, as an analogue of the equivalence of the \'{e}tale spaces over $X$ and the sheaves on $X$.
The main result in this paper is the following theorem.
\setcounter{section}{4}
\setcounter{thm}{2}
\begin{thm}
Let $X$ be a $T_1$ space. 
Then, there exists an equivalence ${\bf Pse}(X) \simeq \mbox{\bf \'{E}tGrpd}_X$.
\end{thm}
{\bf Pse}$(X)$ is the category of the pseudogroup sheaves on $X$, and {\bf \'{E}tGrpd}$_X$ is the category of the \'{e}tale groupoids over $X$.
The requirement that the base space is a $T_1$ space in Theorem 4.3 is contrasted with the requirement that the base space is a sober space in Resende's result. \par
\vspace{5pt}
We review sheaves and \'{e}tale spaces in Section 1.
We see the correspondence from \'{e}tale groupoids to pseudogroup sheaves in Section 2.
We define a pseudogroup sheaf, and see the correspondence from pseudogroup sheaves to \'{e}tale groupoids in Section 3.
The main result is obtained by summarizing the contents of the previous sections in categorical viewpoints in Section 4.
As an added note, the interpretation of a pseudogroup in classical sence as a pseudogroup sheaf is given in Section 5. \par
In this paper, we write the operation of a given category in three different manners.
\begin{quote}
$(-) \circ (-)$ is the composition of two maps. \\
$(-) \cdot (-)$ is the operation of a given \'{e}tale groupoid. \\
$(-) \bullet (-)$ is the operation of a given pre-pseudogroupoid.
\end{quote}
We often call a morphism ``an arrow'' in a given \'{e}tale groupoid or a given pre-pseudogroupoid.
The purpose of these manners is to eliminate confusion.
\setcounter{section}{0}
\setcounter{thm}{0}
	\section{Sheaves}
Let $X$ be a topological space.
A {\it presheaf} on $X$ is a contravariant functor on the ordered set of the open sets in $X$ with valued in the category of the (small) sets and the maps.
A {\it sheaf} on $X$ is a presheaf satisfying the sheaf condition (cf. Definition \ref{sheaf}).
The category of the sheaves on $X$ is equivalent to the category of the \'{e}tale spaces over $X$ (cf. Section1.5).
	\subsection{Preliminaries}
Let $X$ be a topological space.
First, we mention some notations for topological spaces in this paper.
\begin{defi} \label{top}
\mbox{}
\begin{itemize}
\item $X_{top}$ is the set of the open sets in $X$.
\item $\mathcal{N} (x)$ is the set of the open neighborhoods of a point $x \in X$.
\item For two points $x, y \in X$, we denote $x \rightarrow y$ if $\mathcal{N} (y) \subset \mathcal{N} (x)$.\\
(In other words, we denote $x \rightarrow y$ if a sequence $\{ x \}$ converges to $y$.)
\item A topological space $X$ is a {\it $T_1$ space} if a relation $x \rightarrow y$ implies the equality $x=y$ for any two points $x, y \in X$.
\end{itemize}
\end{defi}
$X_{top}$ is an oreded set, therefore, $X_{top}$ is a small category.
We also mention some notations for categories.
\begin{defi} \label{cat}
\mbox{}
\begin{itemize}
\item $Ob(\mathcal{C})$ is the class of the objects of a given category $\mathcal{C}$.
\item $\mathcal{C}^{op}$ is the opposite category of a category $\mathcal{C}$.
\item A set is {\it small} if it is in the given universe (cf. \cite{mac2013categories}). \\
A category is {\it small} if the set of the objects and the set of the morphisms are small.
\item {\bf Set} is the category of the (small) sets and the maps.
\item A {\it filtered set} $\Lambda$ is an ordered set $\Lambda$ such that, for any two element $\lambda, \mu \in \Lambda$, there exists an element $\nu \in \Lambda$ such that it satisfies the relations $\lambda, \mu \le \nu$.
\item A {\it direct system} (resp. a {\it inverse system}) indexed by a filtered set $\Lambda$ is a functor $\Lambda \rightarrow {\bf Set}$ (resp. $\Lambda^{op} \rightarrow {\bf Set}$).
\item A {\it direct limit} $\displaystyle{\lim_{\longrightarrow}} F$ (resp. a {\it inverse limit} $\displaystyle{\lim_{\longleftarrow}} G$) of a direct system $F$ (resp. a inverse system $G$) indexed by a filtered set $\Lambda$ is a set defined by the follows:
\begin{itemize}
\item[$\circ$] $\displaystyle{\lim_{\longrightarrow}} F = \coprod_\lambda F_\lambda /{\sim}$, \\
where $\sim$ is a equivalent relation defined by the follows:
\[
x \sim y \, (x \in F_\lambda, y \in F_\mu) \Leftrightarrow 
^\exists \nu \ge \lambda, \mu \mbox{ s.t. } F_{\lambda \le \nu}(x) = F_{\mu \le \nu}(y).
\]
\item[$\circ$] $\displaystyle{\lim_{\longleftarrow}} G = \{ (x_\lambda) \in \prod_\lambda G_\lambda \, | \, G_{\mu \ge \lambda}(x_\lambda) = x_\mu \}$.
\end{itemize}
\end{itemize}
\end{defi}
	\subsection{Sheaves}
We define sheaves on a topological space.
\begin{defi}[cf. \cite{forster2012lectures}] \label{sheaf}
A {\it presheaf} on a topological space $X$ is a functor $X_{top}^{op} \rightarrow {\bf Set}$.
A presheaf $\mathcal{F}$ is a {\it sheaf} (or a {\it sheaf of sections}) if, for any open set $U \subset X$ and any open covering $\{ U_\lambda \}$ of $U$, the following diagram is an equalizer:
\[
\mathcal{F}(U) \rightarrow \prod_\lambda \mathcal{F}(U_\lambda) \rightrightarrows \prod_{\lambda, \mu} \mathcal{F}(U_{\lambda \mu}),
\]
where $U_{\lambda \mu} = U_\lambda \cap U_\mu$.
A {\it section} of a (pre)sheaf $\mathcal{F}$ on an open set $U$ is an element $s \in \mathcal{F}(U)$.\par
A {\it morphism} between (pre)sheaves is a natural transformation between functors.
\end{defi}
	\subsection{\'{E}tale spaces}
The concept of \'{e}tale spaces is equivalent to sheaves.
\begin{defi}[cf. \cite{forster2012lectures}]
An {\it \'{e}tale space} (or a {\it sheaf of germs}) over a topological space $X$ is a local homeomorphism $E \rightarrow X$ from a topological space $E$ to $X$.\par
Let $p : E \rightarrow X$ and $q : F \rightarrow X$ be \'{e}tale spaces over $X$.
A {\it morphism} $f : p \rightarrow q$ over $X$ is a continuous map $f : E \rightarrow F$ such that the following diagram is commutative:
	\[\xymatrix{
		E \ar[r]^-{f} \ar[d]_-{p}
		&F \ar[d]^-{q}
	\\
		X \ar@{=}[r]
		& X
	}\]
\end{defi}
Let $p : E \rightarrow X$ be a \'{e}tale space over $X$.
The sheaf $\Gamma(-; p)$ associated with $p$ is defined by the follows:
\[
\Gamma(U; p) = \{ s : U \rightarrow E \, | \, p(s(x)) = x \, (^\forall x \in U) \}
\]
	\subsection{\'{E}tale spaces associated with presheaves}
We will consider the inverse construction of the previous subsection.
Let $\mathcal{F}$ be a presheaf on $X$.
Let $x \in X$ be a point.
The set $\mathcal{N}(x)$ of open neighborhoods of $x$ is an oreded set, therefore, $\mathcal{N}(x)$ is a small category.
The opposite category $\mathcal{N}(x)^{op}$ of $\mathcal{N}(x)$ is a filtered set.
\begin{defi}[cf. \cite{forster2012lectures}]
The {\it stalk} $\mathcal{F}_x$ of $\mathcal{F}$ at $x$ is the direct limit of the direct system $\mathcal{F} |_{\mathcal{N}(x)^{op}}$.
For a section $s \in \mathcal{F}(U)$ and a point $x \in U$, the {\it germ} $s_x$ of $s$ at $x$ is the image of $s$ for the natural map $\mathcal{F}(U) \rightarrow \mathcal{F}_x$.
\end{defi}
We define a set $E_\mathcal{F}$ and a map $p_\mathcal{F} : E_\mathcal{F} \rightarrow X$ as the follows:
\[
\begin{array}{lcl}
E_\mathcal{F} & = & \displaystyle\coprod_x \mathcal{F}_x, \\
p_\mathcal{F}(s_x) & = & x \, (s_x \in \mathcal{F}_x).
\end{array}
\]
For an open set $U \subset X$ and a section $s \in \mathcal{F}(U)$, a subset $[s, U] \subset E_\mathcal{F}$ is defined as $[s, U] = \{ s_x \, | \, x \in U \}$.
\begin{prop}[cf. \cite{forster2012lectures}]
The correction of subsets $\{ [s, U] \, | \, s, U \}$ determines a unique topology on $E_\mathcal{F}$ such that $\{ [s, U] \, | \, s, U \}$ is a basis for the topology.
Moreover, the map $p_\mathcal{F} : E_\mathcal{F} \rightarrow X$ is a local homeomorphism.
\end{prop}
$p_\mathcal{F} : E_\mathcal{F} \rightarrow X$ is the \'{e}tale space associated with the presheaf $\mathcal{F}$.
	\subsection{Categorical viewpoints}
Some concrete categories are defined as follows.
\begin{quote}
{\bf PSh}$(X)$ the category of the presheaves on $X$. \\
{\bf Sh}$(X)$ the category of the sheaves on $X$. \\
{\bf \'{E}t}$_X$ the category of the \'{e}tale spaces over $X$.
\end{quote}
The construction $p \mapsto \Gamma(-; p)$ defines a functor ${\bf \Gamma} : \mbox{\bf \'{E}t}_X \rightarrow {\bf Sh}(X) \subset {\bf PSh}(X)$.
The construction $\mathcal{F} \mapsto p_\mathcal{F}$ defines a functor ${\bf p} : {\bf PSh}(X) \rightarrow \mbox{\bf \'{E}t}_X$.
Then, there exists an adjunction ${\bf p} \dashv {\bf \Gamma} : {\bf PSh}(X) \rightarrow \mbox{\bf \'{E}t}_X$.
The unit map $\mu : Id \rightarrow {\bf \Gamma} \circ {\bf p}$ of the adjunction ${\bf p} \dashv {\bf \Gamma}$ is called the {\it sheafification}.
For a presheaf $\mathcal{F}$, the component $\mu_\mathcal{F} : \mathcal{F} \rightarrow \Gamma(-, p_\mathcal{F})$ of $\mu$ is defined as follows:
\[
\mu_{\mathcal{F}, U} : \mathcal{F}(U) \rightarrow \Gamma(U, p_\mathcal{F}); s \mapsto [x \mapsto s_x]
\]\par
The counit map ${\bf p} \circ {\bf \Gamma} \rightarrow Id$ of the adjunction ${\bf p} \dashv {\bf \Gamma}$ is isomorphic.
A component of the unit map $\mu_\mathcal{F} : \mathcal{F} \rightarrow \Gamma(-; p_\mathcal{F})$ is an isomorphism if and only if the presheaf $\mathcal{F}$ is a sheaf.
Therefore, the restruction of the adjunction ${\bf p} \dashv {\bf \Gamma}$ induces an equivalence ${\bf Sh}(X) \simeq \mbox{\bf \'{E}t}_X$.
	\section{\'{E}tale groupoids}
A {\it topological groupoid} (over $X$) is a groupoid object in the category of the topological spaces (such that the object space is $X$). 
A topological groupoid is {\it \'{e}tale} if the source map is a local homeomorphism
(cf. Definition \ref{etale}).
Any \'{e}tale groupoid has a sheaf associated with the \'{e}tale groupoid (cf. Section 2.2).
	\subsection{\'{E}tale groupoids}
Recall, a (small) {\it groupoid} $\mathcal{G}$ is a (small) category $\mathcal{G}$ such that any morphism in $\mathcal{G}$ is an isomorphism.
(In this paper, we assume that a groupoid is small unless it is confusing.)
A {\it topological groupoid} is a groupoid $\mathcal{G} = (\mathcal{G}_0,\mathcal{G}_1,s,t,i,inv,comp)$ such that the set of objects $\mathcal{G}_0$ and the set of arrows $\mathcal{G}_1$ are topological spaces, and that the structure maps (i.e. source map $s$, target map $t$, identities map $i$, inversion map $inv$ and composition map $comp$) are continuous.
\begin{defi} \label{etale}
A topological groupoid is {\it \'{e}tale} if the source map is a local homeomorphism.
\end{defi}
\begin{rem}
A topological groupoid is \'{e}tale if and only if the all structure maps are local homeomorphisms.
\end{rem}
In this paper, we fix the object space $\mathcal{G}_0$.
\begin{defi}
Let $X$ be a topological space.
A {\it topological groupoid over $X$} is a topological groupoid such that the object space is $X$.
\end{defi}
Morphisms between topological groupoids are continuous functors.
\begin{defi}
Let $\mathcal{G}$ and $\mathcal{H}$ be topological groupoids over a topological space $X$.
A {\it morphism $\phi : \mathcal{G} \rightarrow \mathcal{H}$ over $X$} is a functor $\phi : \mathcal{G} \rightarrow \mathcal{H}$ such that the object map of $\phi$ is the identity map of $X$ and that the arrow map of $\phi$ is continuous.
\end{defi}
	\subsection{Sheaves associated with \'{e}tale groupoids}
Let $\mathcal{G}$ be an \'{e}tale groupoid over $X$.
We define $\widehat{\mathcal{G}} (U,V)$ as the set $\{ f : U \rightarrow t^{-1} (V) | s \circ f = id \}$ for each open sets $U,V \in X_{top}$. 
The correction of these sets $\widehat{\mathcal{G}} (U,V)$ define a small category $\widehat{\mathcal{G}}$ such that the objects are the open sets in $X$.
(The composition $(-) \bullet (-)$ of the category $\widehat{\mathcal{G}}$ is defined as $(g \bullet f)(x) = g(t(f(x))) \cdot f(x)$ for $x \in U$, where the operation $(-) \cdot (-)$ is the composition of the groupoid $\mathcal{G}$.)
The category $\widehat{\mathcal{G}}$ includes the category $X_{top}$ as a subcategory.\par
For each open set $V \in X_{top}$ and each point $x \in X$, let $\widehat{\mathcal{G}}_x (V) = \displaystyle{\lim_{\underset{U \ni x}{\longrightarrow}}} \widehat{\mathcal{G}} (U,V)$. 
For each points $x,y \in X$, let $\widehat{\mathcal{G}}_x^y = \displaystyle{\lim_{\underset{V \ni y}{\longleftarrow}}} \widehat{\mathcal{G}}_x (V)$.
\begin{prop} \label{groupoidtosheaf}
The above $\widehat{\mathcal{G}}$ satisfies the following:
\begin{enumerate}
\renewcommand{\labelenumi}{(\arabic{enumi})}
  \item $Ob(X_{top}) = Ob(\widehat{\mathcal{G}})$.
  \item[(2.1)] The natural projections $\widehat{\mathcal{G}}_x^y \rightarrow \widehat{\mathcal{G}}_x (V)$ are injections. \\
(This identifies the set $\widehat{\mathcal{G}}_x^y$ as the set $\operatorname{Im}[\widehat{\mathcal{G}}_x^y \rightarrow \widehat{\mathcal{G}}_x (V)]$.)
  \item[(2.2)] The map $\displaystyle{\coprod_{y \in V}} \widehat{\mathcal{G}}_x^y \rightarrow \widehat{\mathcal{G}}_x (V)$ is surjection.
  \item[(2.3)] For three points $x, y, z \in X$, if we have a relation $y \rightarrow z$ (cf. Definition \ref{top}), then we obtain the inclusion $\widehat{\mathcal{G}}_x^y \subset \widehat{\mathcal{G}}_x^z$.
  \item[(2.4)] For a germ $f_x \in \widehat{\mathcal{G}}_x (V)$, $f_x$ belongs to $\widehat{\mathcal{G}}_x^y$ if and only if we have the relation $f(x) \rightarrow y$.
\setcounter{enumi}{2}
  \item The presheaf $\widehat{\mathcal{G}} (-,V)$ is a sheaf for each $V \in X_{top}$.
\end{enumerate}
\end{prop}
\begin{proof}
The claims {\it (1)} and {\it (3)} is obvious by definition. \par
We will prove the claim {\it (2.1)}.
The map $\widehat{\mathcal{G}} (U,V_1) \rightarrow \widehat{\mathcal{G}} (U,V_2)$ is an injection by definition for any open sets $U$, $V_1$ and $V_2$ ($V_1 \subset V_2$).
The map $\widehat{\mathcal{G}}_x (V_1) \rightarrow \widehat{\mathcal{G}}_x (V_2)$ is an injection for any point $x$ and any open sets $V_1$ and $V_2$ ($V_1 \subset V_2$).
Therefore, the set $\widehat{\mathcal{G}}_x^y (= \displaystyle{\lim_{\underset{V \ni y}{\longleftarrow}}} \widehat{\mathcal{G}}_x (V))$ is identified with the set $\displaystyle{\bigcap_{V \ni y}} \widehat{\mathcal{G}}_x (V)$.
In other words, the natural projections $\widehat{\mathcal{G}}_x^y \rightarrow \widehat{\mathcal{G}}_x (V)$ are injections.\par
We identify the set $\widehat{\mathcal{G}}_x^y$ as the set $\operatorname{Im}[\widehat{\mathcal{G}}_x^y \rightarrow \widehat{\mathcal{G}}_x (V)]$ by the claim {\it (2.1)}.
For a germ $f_x \in \widehat{\mathcal{G}}_x (V)$, $f_x$ belongs to $\widehat{\mathcal{G}}_x^y$ if and only if, for any open neighborhood $V'$ of $y$, there exist an open neighborhood $U$ of $x$ and a section $f \in \widehat{\mathcal{G}}(U, V')$ such that the germ of $f$ at $x$ coincides with $f_x$.
We obtain the claim {\it (2.2)} because $f_x$ belongs to the set $\widehat{\mathcal{G}}_x^{f(x)}$ for any $f_x \in \widehat{\mathcal{G}}_x (V)$.\par
We will prove the claim {\it (2.3)}.
Suppose that we have the relation $y \rightarrow z$.
Take any germ $f_x \in \widehat{\mathcal{G}}_x^y$.
For any open neighborhood $V$ of $y$, there exist an open neighborhood $U$ of $x$ and a section $f \in \widehat{\mathcal{G}}(U, V)$ such that the germ of $f$ at $x$ coincides with $f_x$.
Any open neighborhood of $z$ is an open neighborhood of $y$ because of the relation $y \rightarrow z$.
Therefore, the germ $f_x$ belongs to the set $\widehat{\mathcal{G}}_x^z$.\par
We will prove the claim {\it (2.4)}.
A proof of the implication $f(x) \rightarrow y \Rightarrow f_x \in \widehat{\mathcal{G}}_x^y$ is the same as the proof of the claim {\it (2.3)}.
We will prove the inverse.
Suppose that a germ $f_x \in \widehat{\mathcal{G}}_x(V)$ belongs to $\widehat{\mathcal{G}}_x^y$.
Take any open neighborhood $V'$ of $y$.
There exist an open neighborhood $U$ of $x$ and a section $f \in \widehat{\mathcal{G}}(U, V')$ such that the germ of $f$ at $x$ coincides with $f_x$.
The point $f(x)$ belongs to the open set $V'$.
Then, $V'$ is an open neighborhood of $f(x)$.
Therefore, we obtain the relation $f(x) \rightarrow y$.
\end{proof}
\begin{rem}
Suppose that $X$ is a $T_1$ space (cf. Definition \ref{top}).
If there exists a germ $f_x$ belonging to $\widehat{\mathcal{G}}_x^y \cap \widehat{\mathcal{G}}_x^z$, then we obtain the equality $y = f(x) = z$ because of the relation $y \leftarrow f(x) \rightarrow z$.
Therefore, if $X$ is a $T_1$ space, all together the above conditions {\it (2.1)}, {\it (2.2)}, {\it (2.3)} and {\it (2.4)} equivalents the following:
\begin{enumerate}
\renewcommand{\labelenumi}{(\arabic{enumi})}
\setcounter{enumi}{1}
  \item The cannonical map $\displaystyle{\coprod_{y \in V}} \widehat{\mathcal{G}}_x^y \rightarrow \widehat{\mathcal{G}}_x (V)$ induced by the natural projections $\widehat{\mathcal{G}}_x^y \rightarrow \widehat{\mathcal{G}}_x (V)$ is an isomorphism for each open set $V \in X_{top}$ and each point $x \in X$.
\end{enumerate}
\end{rem}
	\section{Pseudogroup sheaves} \label{pseudogroup}
A {\it pseudogroup} is a subgroupoid $\mathcal{H}$ of $\operatorname{Homeo}_X$ satisfying the sheaf property.
We define a {\it pseudogroup sheaf} (cf. Definition \ref{pseudogroupsheaf}), a new generalization of a pseudogroup.
The category of the pseudogroup sheaves on $X$ is equivalent to the category of the \'{e}tale groupoids over $X$ (cf. Section 4).
The interpretation of a pseudogroup in classical sence as a pseudogroup sheaf is given in Section 5.
	\subsection{Pseudogroup sheaves}
Let $X$ be a topological space. 
Suppose that $\mathcal{C}$ is a small category with including $X_{top}$ as a subcategory, and $Ob(X_{top}) = Ob(\mathcal{C})$. 
Then, the functor $\mathcal{C} (-,V) : X_{top}^{op} \subset \mathcal{C}^{op} \rightarrow {\bf Set}$ is a presheaf on $X$ for each $V \in X_{top}$. \par
For each open set $V \in X_{top}$ and each point $x \in X$, let $\mathcal{C}_x (V) = \displaystyle{\lim_{\underset{U \ni x}{\longrightarrow}}} \mathcal{C} (-,V)$. 
For each points $x,y \in X$, let $\mathcal{C}_x^y = \displaystyle{\lim_{\underset{V \ni y}{\longleftarrow}}} \mathcal{C}_x (V)$. \par
Suppose that the cannonical map $\displaystyle{\coprod_{y \in V}} \mathcal{C}_x^y \rightarrow \mathcal{C}_x (V)$ induced by the natural projections $\mathcal{C}_x^y \rightarrow \mathcal{C}_x (V)$ is an isomorphism for each open set $V \in X_{top}$ and each point $x \in X$.
We identify the set $\widehat{\mathcal{G}}_x^y$ as the set $\operatorname{Im}[\widehat{\mathcal{G}}_x^y \rightarrow \widehat{\mathcal{G}}_x (V)]$.
Then, the correction of these sets $\mathcal{C}_x^y$ define a small category $\widehat{\mathcal{G}}$ such that the sets of the objects is $X$ and that the sets of the arrows from $x$ to $y$ is $\mathcal{C}_x^y$.
(The composition $(-) \cdot (-)$ of the category $\mathcal{C}^\star$ is defined as $(g_y \cdot f_x) = (g \bullet f)_x$ for $x \in U$, where the operation $(-) \bullet (-)$ is the composition of the ctegory $\mathcal{C}$.)
\begin{lem}
The above composition $(-) \cdot (-)$ of the category $\mathcal{C}^\star$ is well-defined.
\end{lem}
\begin{proof}
Take a composable pair $(g_y, f_x) \in \mathcal{C}_y^z \times \mathcal{C}_x^y$.
For any open neighborhood $W$ of $z$, there exist an open neighborhood $V$ of $y$ and a section $g \in \mathcal{C}(V, W)$ such that the germ of $g$ at $y$ coincides with $g_y$.
There exist an open neighborhood $U$ of $x$ and a section $f \in \mathcal{C}(U, V)$ such that the germ of $f$ at $x$ coincides with $f_x$.
The composition $g_y \cdot f_x$ is defined as the germ $(g \bullet f)_x$. \par
Take any sections $g' \in \mathcal{C}(V', W)$ and $f' \in \mathcal{C}(U', V')$ such that the germ of $g'$ at $y$ coincides with $g_y$ and that the germ of $f'$ at $x$ coincides with $f_x$.
There exist an open neighborhood $V'' (\subset V \cap V')$ of $y$ and a section $g'' \in \mathcal{C}(V'', W)$ such that the restrictions $g |_{V''}$ and $g' |_{V''}$ coinside with $g''$.
There exist an open neighborhood $U''$ of $x$ and a section $f'' \in \mathcal{C}(U'', V'')$ such that the restrictions $f |_{U'}$ and $f' |_{U''}$ coinside with $f''$.
Then, we obtain the equality $(g \bullet f)_x = (g'' \bullet f'')_x = (g' \bullet f')_x$.
Therefore, the operation $(-) \cdot (-)$ is well-defined.
\end{proof}
\begin{defi} \label{pseudogroupsheaf}
Let $X$ be a $T_1$ space. 
A {\it pre-pseudogroup} on $X$ is a small category $\mathcal{C}$ with an embedding functor $X_{top} \subset \mathcal{C}$ satisfying the following:
\begin{enumerate}
\renewcommand{\labelenumi}{(\arabic{enumi})}
  \item $Ob(X_{top}) = Ob(\mathcal{C})$.
  \item The cannonical map $\displaystyle{\coprod_{y \in V}} \mathcal{C}_x^y \rightarrow \mathcal{C}_x (V)$ induced by the natural projections $\mathcal{C}_x^y \rightarrow \mathcal{C}_x (V)$ is an isomorphism for each open set $V \in X_{top}$ and each point $x \in X$.
  \item the category $\mathcal{C}^\star$ is a groupoid.
\end{enumerate}\par
A {\it pseudogroup sheaf} on $X$ is a pre-pseudogroup on $X$ satisfying the following:
\begin{enumerate}
\renewcommand{\labelenumi}{(\arabic{enumi})}
\setcounter{enumi}{3}
  \item the presheaf $\mathcal{C} (-,V)$ is a sheaf for any $V \in X_{top}$.
\end{enumerate}
\end{defi}
\begin{rem}
When $X$ is not a $T_1$ space, this definition is possible, but could not be reasonable (cf. Proposition \ref{groupoidtosheaf}).
\end{rem}
The first example is the psudogroup sheaf associated with an \'{e}tale groupoid.
\begin{ex}
Let $\mathcal{G}$ be a \'{e}tale groupoid, and let $\widehat{\mathcal{G}}$ be the category defined in Section 2.2.
If $X$ is a $T_1$ space, then the category $\widehat{\mathcal{G}}$ satisfies the conditions $(1)$, $(2)$ and $(4)$ in Definition \ref{pseudogroupsheaf}. 
In fact, the category $\widehat{\mathcal{G}}$ is a pseudogroup sheaf on $X$ because of Proposition \ref{groupoidiso}.
\end{ex}
\begin{prop} \label{groupoidiso}
The groupoid $\mathcal{G}$ is isomorphic to the category $\widehat{\mathcal{G}}^\star$.
In particular, the ctegory $\widehat{\mathcal{G}}^\star$ is a groupoid.
\end{prop}
\begin{proof}
Define a functor $\phi : \widehat{\mathcal{G}}^\star \rightarrow \mathcal{G}$ as the follows:
\[
\phi(f_x) = f(x).
\]
In fact, the map $\phi$ is compatible with the operations by the follows:
\[
\begin{array}{lcll}
\phi(g_y \cdot f_x)
& = & \phi(g \bullet f)_x) & \\
& = & (g \bullet f)(x) & \\
& = & g(y) \cdot f(x) & \mbox{(cf. Section 2.2)} \\
& = & \phi(g_y) \cdot \phi(f_x). &
\end{array}
\]
(Remark that the pair $(g_y, f_x)$ is a composable pair, therefore, we obtain the equality $y = t(f(x))$.)
The functor $\phi$ is a bijection because the source map of the groupoid $\mathcal{G}$ is a local homeomorphism.
Therefore, we obtain an isomorphism $\phi : \widehat{\mathcal{G}}^\star \rightarrow \mathcal{G}$.
\end{proof}
We give other examples.
\begin{ex}
The category $X_{top}$ is a pseudogroup sheaf on $X$.
\end{ex}
\begin{ex} \label{localhomeo}
Let $\operatorname{LoHomeo}_X (U,V) = \{ f : U \rightarrow V : \mbox{local homeomorphisms} \}$ for each open sets $U,V \in X_{top}$. 
The correction of these sets $\operatorname{LoHomeo}_X (U,V)$ define a small category $\operatorname{LoHomeo}_X$ such that the objects are the open sets in $X$.
The category $\operatorname{LoHomeo}_X$ satisfies the conditions $(1)$, $(3)$ and $(4)$ in Definition \ref{pseudogroupsheaf}. 
If $X$ is a $T_1$ space, then $\operatorname{LoHomeo}_X$ is a pseudogroup sheaf on $X$.
\end{ex}
\begin{ex}
Let $\mathcal{F}$ be a sheaf of group on $X$. 
We define $\mathcal{C}$ as 
\[
  \mathcal{C} (U,V) = \begin{cases}
    \mathcal{F} (U) & (U \subset V) \\
    \emptyset & (U \not\subset V).
  \end{cases}
\]
Then $\mathcal{C}$ is a pseudogroup sheaf on $X$.
\end{ex}
	\subsection{\'{E}tale groupoids associated with pre-pseudogroups}
Let $\mathcal{C}$ be a pre-pseudogroup on a $T_1$ space $X$. 
Let $s : E_{\mathcal{C}} \rightarrow X$ be an \'{e}tale space associated with the presheaf $\mathcal{C} (-,X)$. 
Then, $E_{\mathcal{C}} = \displaystyle{\coprod_{x \in X}} \mathcal{C}_x (X) \cong \displaystyle{\coprod_{x,y \in X}} \mathcal{C}_x^y$ and $s(f_x) = x$ for $f_x \in \mathcal{C}_x^y$
(cf. Section 1.4). 
In other words, $E_{\mathcal{C}}$ is the set of arrows of the groupoid $\mathcal{C}^\star$, and $s$ is the source map of $\mathcal{C}^\star$.
\begin{prop}
$\mathcal{C}^\star$ is an \'{e}tale groupoid.
\end{prop}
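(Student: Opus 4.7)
The plan is to equip the morphism set $E_\mathcal{C} = \mathrm{Mor}(\mathcal{C}^\star)$ with the \'{e}tale space topology of the presheaf $\mathcal{C}(-,X)$, whose basis consists of the sets $\tilde f(U) = \{ f_{x'} : x' \in U \}$ for $f \in \mathcal{C}(U,X)$, and then verify the five structure maps of a topological groupoid one by one. The source $s$ is a local homeomorphism automatically, since $s|_{\tilde f(U)} : \tilde f(U) \to U$ is a homeomorphism by construction. The target $t(f_x) = \bar f(x)$ is well-defined by axiom (2), and on each basic open satisfies $t|_{\tilde f(U)} = \bar f \circ (s|_{\tilde f(U)})$; combined with the previous proposition this makes $t$ also a local homeomorphism. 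The identity map $X \to E_\mathcal{C}$, $x \mapsto (\mathrm{id}_X)_x$, is the section of $s$ coming from $\mathrm{id}_X \in \mathcal{C}(X,X)$ (available because $X_{top} \subset \mathcal{C}$ by axiom (1)), and hence is continuous.

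For the inversion, I would represent a germ $f_x$ by some $f \in \mathcal{C}(U,V)$ and reuse the construction inside the proof of the previous proposition to produce opens $U_0 \ni x$, $V_0 \ni \bar f(x)$ and $g \in \mathcal{C}(V_0, U_0)$ such that $f|_{U_0}$ and $g$ are mutual inverses. On the basic open $\widetilde{f|_{U_0}}(U_0)$ the inversion then factors as $f_{x'} \mapsto x' \mapsto \bar f(x') \mapsto g_{\bar f(x')}$, lands inside $\tilde g(V_0)$, and is a composition of continuous maps, so continuity at $f_x$ follows.

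Composition $c : E_\mathcal{C} \times_X E_\mathcal{C} \to E_\mathcal{C}$ is the delicate step and I expect it to be the main obstacle. For a composable pair $(f_{\bar g(x)}, g_x)$ I would pick representatives $f \in \mathcal{C}(V,X)$ and $g \in \mathcal{C}(U,X)$, and using continuity of $\bar g$ shrink $U$ to some $U_0 \ni x$ with $\bar g(U_0) \subset V$. The heart of the argument is then to corestrict $g|_{U_0}$ to some $\tilde g \in \mathcal{C}(U_0, V)$ with $\iota_V \circ \tilde g = g|_{U_0}$, where $\iota_V \in \mathcal{C}(V,X)$ is the inclusion. I would do this in three substeps: (i) axiom (2) places each germ $g_{x'}$ in $\mathcal{C}_{x'}^{\bar g(x')} \subset \mathcal{C}_{x'}(V)$, yielding local corestrictions $g^{(x')} \in \mathcal{C}(U_{x'}, V)$; (ii) postcomposition with $\iota_V$ is injective on morphism sets, because axiom (2) identifies the germs of $h$ and $h'$ whenever $\iota_V \circ h = \iota_V \circ h'$, and the sheaf axiom (4) for $\mathcal{C}(-,V)$ upgrades germ-wise equality to global equality; (iii) this injectivity forces the $g^{(x')}$ to agree on overlaps, and axiom (4) then glues them into the desired $\tilde g$. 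Once $\tilde g$ is available, $f \circ \tilde g \in \mathcal{C}(U_0, X)$ yields a section through $c(f_{\bar g(x)}, g_x)$ on which $c$ is the composition of continuous maps $(f_{\bar g(x')}, g_{x'}) \mapsto (f \circ \tilde g)_{x'}$, completing the proof.
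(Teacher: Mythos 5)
Your proposal is correct, and for the source, target, identity and inversion maps it is essentially the paper's argument (same basic opens $[f,U]=\tilde f(U)$, same identity $t|_{[f,U]}=\bar f\circ s|_{[f,U]}$, same reuse of the local-inverse construction from the preceding proposition). The genuine divergence is in the composition step, which you correctly identify as the delicate one but then over-engineer. The paper simply observes that the germ-level composition $\mathcal{C}_y^z\times\mathcal{C}_x^y\to\mathcal{C}_x^z$ is \emph{defined} by choosing representatives: an element of $\mathcal{C}_x^y=\lim_{V\ni y}\mathcal{C}_x(V)$ already hands you, for each $V\ni y$, a class in $\mathcal{C}_x(V)=\mathrm{colim}_{U'\ni x}\mathcal{C}(U',V)$ and hence a representative $f\in\mathcal{C}(U',V)$ composable with a representative $g\in\mathcal{C}(V,X)$ of the other germ; then $[g,V]\times_X[f,U']$ is the required neighborhood. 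In other words, the corestriction you are after exists \emph{locally} for free, by axiom (2) together with the inverse-limit description of $\mathcal{C}_x^y$ (your substep (i) alone), and since continuity is a local property this is all that is needed. Your substeps (ii) and (iii) --- injectivity of postcomposition with $\iota_V$ and gluing the local corestrictions into a single $\tilde g\in\mathcal{C}(U_0,V)$ --- are not wrong, but they are unnecessary, and they invoke the sheaf axiom (4). That is a real cost: the proposition is stated for an arbitrary pre-pseudogroup $\mathcal{C}$ (it is used, e.g., before sheafification in Section 3), where axiom (4) is not available, so as written your argument only covers pseudogroup sheaves. Dropping the gluing and working with a single local corestriction near the given point, shrunk so that $\iota_V\circ g^{(x)}=g|_{U_x}$ on the nose, recovers the full statement and matches the paper's proof.
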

\begin{proof}
We have to proof that the identities map $i : X \rightarrow E_{\mathcal{C}}$, the inversion map $inv : E_{\mathcal{C}} \rightarrow E_{\mathcal{C}}$ and the composition map $comp : E_{\mathcal{C}} \times_X E_{\mathcal{C}} \rightarrow E_{\mathcal{C}}$ are continuous.
(Then, the target map $t : E_{\mathcal{C}} \rightarrow X$ is continuous because of the equality $t = s \circ inv$.)\par
For an open set $U \subset X$ and a section $f \in \mathcal{C}(U, X)$, a subset $[s, U] \subset E_\mathcal{C}$ is defined as $[s, U] = \{ f_x \, | \, x \in U \}$.
Recall that the collection of these $[f,U]$ generates the open sets of $E_{\mathcal{C}}$
(cf. Section 1.4).
\begin{description}
  \item[Claim.1] {\it $i$ is continuous.}
\end{description} \par
The map $i$ coincides with the inverse of the homeomorphism $s |_{[id_X, X]} : [id_X, X] \rightarrow X$.
Therefore, $i$ is continuous.
\begin{description}
  \item[Claim.2] {\it $inv$ is continuous.}
\end{description} \par
Take any point $g_y \in inv^{-1} ([f, U])$.
Denote $inv(g_y) = f_x$. 
This satisfies the equality $f_x \cdot g_y = id_y$.
$U$ is an open neighborhood of $x (= \overline{g}(y))$.
There exist an open neighborhood $V$ of $y$ and a section $g \in \mathcal{C}(V, U)$ such that the germ of $g$ at $y$ coincides with $g_x$.
We obtain the equality $(f \bullet g)_y = id_y$. 
There exists an open neighborhood $V' \subset V$ of $y$ such that $f \bullet g |_{V'} = id_{V'}$. 
We obtain the inclusion $g_y \in [g, V'] \subset inv^{-1} ([f, U])$.
Therefore, $inv$ is continuous.
\begin{description}
  \item[Claim.3] {\it $comp$ is continuous.}
\end{description} \par
Take any point $(g_y,f_x) \in comp^{-1} ([h, U])$. 
Denote $g_y \cdot f_x = h_x$. 
There exist an open neighborhood $V$ of $y$ and a section $g \in \mathcal{C}(V, X)$ such that the germ of $g$ at $y$ coincides with $g_y$.
There exist an open neighborhood $U' \subset U$ of $x$ and a section $f \in \mathcal{C}(U', V)$ such that the germ of $f$ at $x$ coincides with $f_x$.
Because of the equality $g_y \cdot f_x = h_x$, there exists an open neighborhood $U'' \subset U'$ of $x$ such that $g \bullet f |_{U''} = h_{U''}$. 
We obtain the inclusion $(g_y, f_x) \in [g, V] \times_X [f, U'] \subset comp^{-1} ([h, U])$.
Therefore, $comp$ is continuous.
\end{proof}
	\section{Main result}
Let $X$ be a $T_1$ space.
Some concrete categories are defined as follows.
\begin{quote}
{\bf PPse}$(X)$ the category of the pre-pseudogroups on $X$. \\
{\bf Pse}$(X)$ the category of the pseudogroup sheaves on $X$. \\
{\bf \'{E}tGrpd}$_X$ the category of the \'{e}tale groupoids over $X$.
\end{quote}
The construction $\mathcal{G} \mapsto \widehat{\mathcal{G}}$ (cf. Section 2.2) defines a functor $G : \mbox{\bf \'{E}tGrpd}_X \rightarrow {\bf Pse}(X) \subset {\bf PPse}(X)$.
The construction $\mathcal{C} \mapsto \mathcal{C}^\star$ (cf. Section 3.2) defines a functor $F : {\bf PPse}(X) \rightarrow \mbox{\bf \'{E}tGrpd}_X$.
We will prove that there exists an adjunction $F \dashv G : {\bf PPse}(X) \rightarrow \mbox{\bf \'{E}tGrpd}_X$. \par
We will construct the unit map $\mu : Id \rightarrow GF$ of the adjunction $F \dashv G$.
Let $\mathcal{C}$ be a pre-pseudogroup on $X$.
Define a morphism $\mu_\mathcal{C} : \mathcal{C} \rightarrow \widehat{\mathcal{C}^\star}$ over $X$ as follows:
\[
\mu_\mathcal{C}(f)(x) = f_x \, (\mbox{for $f \in  \mathcal{C}(U, V)$ and $x \in U$}).
\]
Remark that the morphism $\mathcal{C}(-, V) \rightarrow \widehat{\mathcal{C}^\star}(-, V)$ between presheaves induced by $\mu_\mathcal{C}$ is the sheafification
(cf. Section 1.5).
\begin{prop}
Let $\mathcal{C}$ and $\mu_\mathcal{C}$ be as above.
For any \'{e}tale groupoid $\mathcal{G}$ on $X$ and any morphism $\phi : \mathcal{C} \rightarrow \widehat{\mathcal{G}}$ between pre-pseudogroups, there exists a unique morphism $\bar{\phi} : \mathcal{C}^\star \rightarrow \mathcal{G}$ between \'{e}tale groupoids such that the following diagram is commutative:
	\[\xymatrix{
		\mathcal{C} \ar[r]^-{\mu_\mathcal{C}} \ar[dr]_-{\phi}
		& \widehat{\mathcal{C}}^\star \ar[d]^-{G(\bar{\phi})}
	\\
		& \widehat{\mathcal{G}}
	}\]
\end{prop}
\begin{proof}
The morphism $\mathcal{C}(-, X) \rightarrow \widehat{\mathcal{C}^\star}(-, X)$ between presheaves induced by $\mu_\mathcal{C}$ is the sheafification.
$s_{\mathcal{C}^\star} : \mathcal{C}^\star_1 \rightarrow X$ is the \'{e}tale space associated with the presheaf $\mathcal{C}(-, X)$.
The sheaf $\widehat{\mathcal{C}^\star}(-, X)$ (resp. $\widehat{\mathcal{G}}(-, X)$) is the sheaf associated with the \'{e}tale space $s_{\mathcal{C}^\star}$ (resp. $s_\mathcal{G} : \mathcal{G}_1 \rightarrow X$, where $s_\mathcal{G}$ is the source map of the groupoid $\mathcal{G}$).
The morphism $\phi : \mathcal{C}(-, X) \rightarrow \widehat{\mathcal{G}}(-, X)$ between presheaves induces a unique morphism $\bar{\phi}_1 : s_{\mathcal{C}^\star} \rightarrow s_{\mathcal{G}}$ such that the following diagram is commutative:
	\[\xymatrix{
		\mathcal{C}(-, X) \ar[r]^-{\mu_\mathcal{C}} \ar[dr]_-{\phi}
		& \widehat{\mathcal{C}}^\star(-, X) \ar[d]^-{{\bf \Gamma}(\bar{\phi}_1)}
	\\
		& \widehat{\mathcal{G}}(-, X),
	}\]
(where the functor ${\bf \Gamma}$ is the same as in Section 1.5.)
Then, the map $\bar{\phi}_1 : \mathcal{C}^\star_1 \rightarrow \mathcal{G}_1$ can be written concretely as follows:
\[
\bar{\phi}_1(f_x) = \phi(f)(x).
\]
Write the target maps as $t$, and the following equality holds:
\[
\begin{array}{lcl}
t(\bar{\phi}_1(f_x))
& = & t(\phi(f)(x)) \\
& = & t(\phi(f)_x) \\
& = & t(f_x).
\end{array}
\]
For any composable pair $(g_y, f_x)$, the following equality holds:
\[
\begin{array}{lcl}
\bar{\phi}_1(g_y \cdot f_x)
& = & \bar{\phi}_1((g \bullet f)_x) \\
& = & \phi(g \bullet f)(x) \\
& = & (\phi(g) \bullet \phi(f))(x) \\
& = & \phi(g)(y) \cdot \phi(f)(x) \\
& = & \bar{\phi}_1(g_y) \cdot \bar{\phi}_1(f_x).
\end{array}
\]
Therefore, the map $\bar{\phi}_1$ determines a morphism $\bar{\phi} : \mathcal{C}^\star \rightarrow \mathcal{G}$ between \'{e}tale groupoids.
Then, the following diagram is commutative:
	\[\xymatrix{
		\mathcal{C} \ar[r]^-{\mu_\mathcal{C}} \ar[dr]_-{\phi}
		& \widehat{\mathcal{C}}^\star \ar[d]^-{G(\bar{\phi})}
	\\
		& \widehat{\mathcal{G}}
	}\]
Conversely, suppose there exists a morphism $\psi : \mathcal{C}^\star \rightarrow \mathcal{G}$ that makes the above diagram commutative.
Then, the following diagram is commutative:
	\[\xymatrix{
		\mathcal{C}(-, X) \ar[r]^-{\mu_\mathcal{C}} \ar[dr]_-{\phi}
		& \widehat{\mathcal{C}}^\star(-, X) \ar[d]^-{{\bf \Gamma}(\psi_1)}
	\\
		& \widehat{\mathcal{G}}(-, X)
	}\]
We obtain the equality $\psi_1 = \bar{\phi}_1$ because of the universality of the sheafification.
This implies the equality $\psi = \bar{\phi}$.
Therefore, the morphism $\bar{\phi}$ is unique.
\end{proof}
\begin{prop}
The counit map $\epsilon : FG \rightarrow Id$ of the adjunction $F \dashv G$ is isomorphic.
\end{prop}
\begin{proof}
For a \'{e}tale groupoid $\mathcal{G}$, the morphism $\epsilon_\mathcal{G} : \widehat{\mathcal{G}}^\star \rightarrow \mathcal{G}$ is the unique morphism such that the following diagram is commutative:
	\[\xymatrix{
		\widehat{\mathcal{G}} \ar[r]^-{\mu_{\widehat{\mathcal{G}}}} \ar[dr]_-{id}
		& \widehat{\widehat{\mathcal{G}}^\star} \ar[d]^-{G(\epsilon_\mathcal{G})}
	\\
		& \widehat{\mathcal{G}}.
	}\]
Then, the following diagram is commutative:
	\[\xymatrix{
		\widehat{\mathcal{G}}(-, V) \ar[r]^-{\mu_{\widehat{\mathcal{G}}}} \ar[dr]_-{id}
		& \widehat{\widehat{\mathcal{G}}^\star}(-, V) \ar[d]^-{{\bf \Gamma}(\epsilon')}
	\\
		& \widehat{\mathcal{G}}(-, V),
	}\]
where $\epsilon'$ is the morphism between \'{e}tale spaces determined by the arrow map $(\epsilon_\mathcal{G})_1 : \widehat{\mathcal{G}}^\star_1 \rightarrow \mathcal{G}_1$.
These morphisms $\epsilon'$ coincide with the components of the counit map of the adjunction ${\bf p} \dashv {\bf \Gamma} : {\bf PSh}(X) \rightarrow \mbox{\bf \'{E}t}_X$ (cf. Section 1.5) because of the universality of the sheafification.
These are isomorphisms.
Therefore, $\epsilon$ is isomorphic.
\end{proof}
A component of the unit map $\mu_\mathcal{C} : \mathcal{C} \rightarrow \widehat{\mathcal{C}^\star}$ is an isomorphism if and only if the pre-pseudogroup $\mathcal{C}$ is a pseudogroup sheaf.
Therefore, the restriction of the adjunction $F \dashv G$ induces an equivalence ${\bf Pse}(X) \simeq \mbox{\bf \'{E}tGrpd}_X$.
We obtain the following theorem.
\begin{thm} \label{main}
Let $X$ be a $T_1$ space. 
Then, there exists an equivalence ${\bf Pse}(X) \simeq \mbox{\bf \'{E}tGrpd}_X$.
\end{thm}
	\section{Classical pseudogroups} 
Let $X$ be a $T_1$ space, and let $\mathcal{C}$ be a pre-pseudogroup on $X$. 
Any section $f \in \mathcal{C} (U,V)$ has the {\it underlying map} $\overline{f} : U \rightarrow V$ in the following way: 
Take any point $x \in U$. 
The germ $f_x$ of $f$ at $x$ belongs to $\mathcal{C}_x (V) \cong \displaystyle{\coprod_{y \in V}} \mathcal{C}_x^y$. 
Then, there exists exactly one point $y \in V$ such that the germ $f_x$belongs to $\mathcal{C}_x^y$. 
We define $\overline{f} (x) = y$.\par
The map $\overline{f} : U \rightarrow V$ is a local homeomorphism because of the equality $\overline{f} = t \circ (s |_{[f, U]})^{-1}$, where $t$ is the target map and $s$ is the source map.
Moreover, the correspondence $f \mapsto \overline{f}$ define a morphism over $X$ from the pre-pseudogroup $\mathcal{C}$ to the pseudogroup sheaf $\operatorname{LoHomeo}_X$
(cf. Example \ref{localhomeo}). 
\begin{lem}
\mbox{}
\begin{itemize}
\item $\overline{id} = id$.
\item $\overline{g \bullet f} = \overline{g} \circ \overline{f}$.
\end{itemize}
\end{lem}
\begin{proof}
We will prove the equality $\overline{id} = id$.
Take any identity $id \in \mathcal{C}(U, U)$ and any point $x \in U$.
For any open neighborhood $V \subset U$ of $x$, the restriction $id |_V$ of $id$ belongs to the set $\mathcal{C}(V, V)$.
Therefore, the germ $id_x$ belongs to the set $\mathcal{C}_x^x$.
We obtain the equality $\overline{id}(x) = x$. \par
We will prove the equality $\overline{g \bullet f} = \overline{g} \circ \overline{f}$.
Take any composable pair $(g, f) \in \mathcal{C}(V, W) \times \mathcal{C}(U, V)$ and any point $x \in U$.
Let $y = \overline{f}(x)$.
The germ $g_y$ beongs to the set $\mathcal{C}_y^{\overline{g}(y)}$.
For any open neighborhood $W' \subset W$ of $\overline{g}(y)$, there exist an open neighborhood $V' \subset V$ of $y$ and a section $g \in \mathcal{C}(V', W')$ such that the germ of $g$ at $y$ coincides with $g_y$.
There exist an open neighborhood $U' \subset U$ of $x$ and a section $f \in \mathcal{C}(U', V')$ such that the germ of $f$ at $x$ coincides with $f_x$.
Therefore, the germ $(g \bullet f)_x$ belongs to the set $\mathcal{C}_x^{\overline{g}(y)}$.
We obtain the equality $(\overline{g \bullet f})(x) = \overline{g}(y) = (\overline{g} \circ \overline{f})(x)$.
\end{proof}
A pseudogroup in classical sence corresponds with a {\it concrete} pseudogroup sheaf.
\begin{defi}
A pseudogroup sheaf $\mathcal{C}$ on $X$ is {\it concrete} if the functor $\mathcal{C} \rightarrow \operatorname{LoHomeo}_X;f \mapsto \overline{f}$ is faithful.
\end{defi}
Let $\mathcal{C}$ be a concrete pseudogroup on $X$. 
Then the subcategory of invertible morphisms of $\mathcal{C}$ is a {\it pseudogroup} in a classical sence.\par
Conversely, let $\mathcal{Q}$ be a pseudogroup in a classical sence.
For two open sets, define a set $\mathcal{C'}(U, V)$ by the follows:
\[
\mathcal{C'}(U, V) = \{ i \circ f \, | \, f \in \mathcal{Q}(U, V'), \mbox{$i$ is the inclusion map $V' \hookrightarrow V$} \}.
\]
The correction of these sets $\mathcal{C'}(U, V)$ determines a pre-pseudogroup $\mathcal{C'}$.
We can obtain a concrete pseudogroup sheaf $\mathcal{C}$ by the sheafification of the pre-pseudogroup $\mathcal{C'}$.

\bibliography{groupoid,category,complex}

\begin{thebibliography}{1}

\bibitem{forster2012lectures}
Otto Forster.
\newblock {\em Lectures on Riemann surfaces}, volume~81.
\newblock Springer Science \& Business Media, 2012.

\bibitem{haefliger1958}
André Haefliger.
\newblock Structures feuilletées et cohomologie à valeur dans un faisceau de
  groupoïdes.
\newblock {\em Commentarii Mathematici Helvetici}, 32:248--329, 01 1958.

\bibitem{haefliger2006lecture}
André Haefliger.
\newblock {\em Homotopy and integrability}, volume 197, pages 133--163.
\newblock 11 2006.

\bibitem{mac2013categories}
Saunders Mac~Lane.
\newblock {\em Categories for the working mathematician}, volume~5.
\newblock Springer Science \& Business Media, 2013.

\bibitem{moerdijk2003introduction}
Ieke Moerdijk and Janez Mrcun.
\newblock {\em Introduction to foliations and Lie groupoids}, volume~91.
\newblock Cambridge University Press, 2003.

\bibitem{resende2007etale}
Pedro Resende.
\newblock {\'E}tale groupoids and their quantales.
\newblock {\em Advances in Mathematics}, 208(1):147--209, 2007.

\end{thebibliography}
\bibliographystyle{plain}


\end{document}